\begin{document}

\newtheorem{theorem}{Theorem}
\newtheorem{corollary}[theorem]{Corollary}
\newtheorem{lemma}[theorem]{Lemma}
\newtheorem{proposition}[theorem]{Proposition}
\newtheorem{conjecture}[theorem]{Conjecture}
\newtheorem{definition}[theorem]{Definition}
\newtheorem{problem}[theorem]{Problem}
\newtheorem{remark}[theorem]{Remark}
\newtheorem{example}[theorem]{Example}

\def\A{{\mathcal A}}
\def\B{{\mathcal B}}
\def\C{{\mathcal C}}
\def\D{{\mathcal D}}
\def\F{{\mathcal F}}
\def\H{{\mathcal H}}
\def\J{{\mathcal J}}
\def\K{{\mathcal K}}
\def\N{{\mathcal N}}
\def\OO{{\mathcal O}}
\def\P{{\mathcal P}}
\def\SS{{\mathcal S}}
\def\U{{\mathcal U}}
\def\W{{\mathcal W}}

\def\bC{{\mathbb C}}
\def\bN{{\mathbb N}}
\def\bT{{\mathbb T}}
\def\bZ{{\mathbb Z}}

\def\auto{{\operatorname{Aut}}}
\def\endo{{\operatorname{End}}}
\def\outo{{\operatorname{Out}}}
\def\sp{{\operatorname{span}}}
\def\clsp{{\overline{\operatorname{span}}}}
\def\Ad{\operatorname{Ad}}
\def\id{\operatorname{id}}
\def\di{\operatorname{diag}}

\def\g{{\mathfrak R}{\mathfrak G}_E}

\def\p2{\pi\oplus\pi}

\title{On Normalizers of $C^*$-Subalgebras in the Cuntz Algebra ${\mathcal O}_n$. II}

\author{Tomohiro Hayashi\footnote{Tomohiro Hayashi was supported by JSPS KAKENHI Grant 
Number 25400109.} and  
Wojciech Szyma{\'n}ski\footnote{This work was supported by the FNU Project Grant 
`Operator algebras, dynamical systems and quantum information theory' (2013--2015), 
and the Villum Fonden project grant `Local and global structures of groups and their algebras' 
(2014--2018). }}

\date{\small 9 January 2015}
\maketitle

\renewcommand{\sectionmark}[1]{}

\vspace{7mm}
\begin{abstract}
We investigate subalgebras $A$ of the Cuntz algebra $\OO_n$ that arise as finite direct sums of corners 
of the UHF-subalgebra $\F_n$. For such an $A$, we completely determine its normalizer group inside 
$\OO_n$. 
\end{abstract}

\vfill
\noindent {\bf MSC 2010}: 46L05

\vspace{3mm}
\noindent {\bf Keywords}: Cuntz algebra, normalizer, relative commutant

\newpage

%%%%%%%%%%%%%%%%%%%%%%%%%%%%%%%%%%%%%%%%%%%%%%%%
%%%%%%%%%%%%%%%%%%%%%%%%%%%%%%%%%%%%%%%%%%%%%%%%

\noindent
{\bf Introduction.} 
This note is a continuation of the investigations of $C^*$-subalgebras of the Cuntz algebra 
$\OO_n$ carried out by the first named author in \cite{H}. The main results therein pertain $C^*$-subalgebras 
$A$ of the core UHF-subalgebra $\F_n$ of $\OO_n$ with finite-dimensional relative commutant $A'\cap\F_n$. 
In particular, \cite[Theorem 1.2]{H} says that for such an $A$, the index of the subgroup 
$\{\Ad u|_A : u\in\N_{\F_n}(A)\}$ in $\{\Ad W|_A : W\in\N_{\OO_n}(A)\}$ is finite. The main purpose 
of the present note is to completely determine the structure of normalizer $\N_{\OO_n}(A)$ in the case 
\begin{equation}\label{formofA}
A = \bigoplus_{j=1}^k e_j\F_n e_j,
\end{equation}
where $e_1,\ldots,e_k$ are projections in $\F_n$ such that $\sum_{j=1}^k e_j=1$. The interesting 
and non-trivial aspects of our analysis stem from the fact, observed already in \cite[Example 1.18]{H}, 
that $\N_{\OO_n}(A)$ is not contained in $\F_n$ in general. 

In addition to its intrinsic interest, our work is motivated by its close relation to index theory in the context 
of endomorphisms of the Cuntz algebras, e.g. see  \cite{W,I,CP}. In a more recent paper on this subject, 
\cite{CRS}, endomorphisms of $\OO_n$ globally preserving $\F_n$ are investigated, and we hope that 
the results of the present paper may help shed light on some of the outstanding questions raised therein. 

%%%%%%%%%%%%%%%%%%%%%%%%%%%%%%%%%%%%%%%%%%%%%%%%
%%%%%%%%%%%%%%%%%%%%%%%%%%%%%%%%%%%%%%%%%%%%%%%%

\vspace{3mm}\noindent
{\bf Notation.} 
For an integer $n\geq 2$, $\OO_n$ is the $C^*$-algebra generated by isometries $S_1,\ldots,S_n$ such that 
$\sum_{i=1}^n S_iS_i^*=1$, \cite{Cun1}. If $\mu=\mu_1\mu_2\cdots\mu_k$ is a word on alphabet 
$\{1,\ldots,n\}$ then we denote $S_\mu=S_{\mu_1}S_{\mu_2}\cdots S_{\mu_k}$, an isometry in $\OO_n$. 
The range projections $P_\mu:=S_\mu S_\mu^*$ corresponding to all words generate a MASA $\D_n$. 
For a word $\mu=\mu_1\cdots\mu_k$ we denote by $|\mu|=k$ its length. Also, we use symbol $\prec$ to 
denote the lexicographic order on words. 

The circle group $U(1)$ acts on $\OO_n$ by $\gamma_z(S_i)
=zS_i$. The fixed-point algebra for this action, denoted $\F_n$, is a UHF-algebra of type $n^\infty$. 
We denote by $\tau:\F_n\to\bC$ the unique normalized trace on $\F_n$. We also let $\varphi:\OO_n\to\OO_n$ 
to be the canonical shift endomorphism, that is 
\begin{equation}\label{shift}
\varphi(x) = \sum_{i=1}^n S_i x S_i^*. 
\end{equation}
For each $x\in\OO_n$ and each generator $S_i$ we have $S_i x=\varphi(x)S_i$. 

If $B$ is a unital $C^*$-algebra then $\U(B)$ denotes the group of its unitary elements. If $A$ is a 
$C^*$-subalgebra of $B$ then $\N_{B}(A):=\{u\in\U(B) : uAu^*=A\}$ is the normalizer of $A$ in $B$. 

%%%%%%%%%%%%%%%%%%%%%%%%%%%%%%%%%%%%%%%%%%%%%%%%
%%%%%%%%%%%%%%%%%%%%%%%%%%%%%%%%%%%%%%%%%%%%%%%%

\vspace{3mm}\noindent
{\bf The main results.} 
The following lemma and its proof are motivated by Examples 1.17 and 1.18 of \cite{H}. It constitutes 
a technical basis for our further considerations. 

\begin{lemma}\label{lem1}
Let $e,f$ be non-zero projections in $\F_n$, and let $U\in\U(\OO_n)$ be such that $Ue\F_n eU^* = f\F_n f$. 
Then there exists an integer $m$ such that 
$$ \frac{\tau(f)}{\tau(e)} = n^m. $$ 
\end{lemma}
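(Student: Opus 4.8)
The plan centers on the unique trace $\tau$ on $\F_n$ and how it interacts with the shift endomorphism $\varphi$. The key structural fact I would exploit is that for any $x\in\F_n$ one has $\tau(\varphi(x))=\tau(x)$, while the range projections of the generators scale the trace by a factor of $n$: specifically $\tau(S_i x S_i^*)=\frac{1}{n}\tau(x)$ for $x\in\F_n$, since the projections $P_i=S_iS_i^*$ each have trace $1/n$ and $\varphi$ is trace-preserving. Thus conjugation by words $S_\mu$ multiplies traces of projections in $\F_n$ by powers of $n^{\pm 1}$. The goal is to show that an \emph{arbitrary} unitary $U\in\U(\OO_n)$ implementing an isomorphism $e\F_n e\to f\F_n f$ must replicate this same integer-power scaling behavior, even though $U$ itself need not lie in $\F_n$.

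\textbf{Approach.} First I would reduce to understanding $U$ modulo the gauge action $\gamma_z$. Every element of $\OO_n$ has a Fourier-type decomposition into spectral subspaces $\OO_n^{(\ell)}=\{x : \gamma_z(x)=z^\ell x\}$, and $\F_n=\OO_n^{(0)}$. Elements of $\OO_n^{(\ell)}$ for $\ell>0$ are norm-limits of sums $\sum S_\mu a_\mu$ with $a_\mu\in\F_n$ and $|\mu|=\ell$, with an analogous description using $S_\nu^*$ for $\ell<0$. I would write $U=\sum_\ell U_\ell$ in terms of its spectral components and try to locate a single ``dominant'' homogeneous degree that controls the trace-scaling. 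The idea is that the relation $Ue\F_n eU^*=f\F_n f$, combined with the fact that both $e\F_n e$ and $f\F_n f$ are unital subalgebras of $\F_n$ with gauge-invariant traces, should force $U$ to intertwine the two traces up to the factor $\tau(f)/\tau(e)$, and this factor must be a power of $n$ because it is realized by shifting in and out of the tail.

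\textbf{Key steps in order.} (1) Normalize so that the isomorphism $\Phi:e\F_n e\to f\F_n f$ given by $\Phi(x)=UxU^*$ carries the restricted trace $\tau|_{e\F_n e}$, rescaled to a state, to $\tau|_{f\F_n f}$ rescaled to a state; since both corners are UHF-type and the normalized traces are unique, $\Phi$ is automatically trace-preserving after normalization, giving $\tau(UxU^*)=\frac{\tau(f)}{\tau(e)}\tau(x)$ for all $x\in e\F_n e$. (2) Apply this to $x=e$ to get $\tau(UeU^*)=\frac{\tau(f)}{\tau(e)}\tau(e)=\tau(f)$, consistent but not yet decisive. (3) The real leverage comes from approximating $U$ by elements of the dense $*$-algebra generated by the $S_i$; such approximants live in finitely many spectral subspaces, so I would show that conjugation by a finite sum $\sum_{|\mu|=p,|\nu|=q} S_\mu a_{\mu\nu} S_\nu^*$ scales traces of projections in $\F_n$ by $n^{q-p}$, and then pass to the limit to conclude $\tau(f)/\tau(e)=n^{q-p}$ for the appropriate degree, setting $m=q-p$.

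\textbf{Main obstacle.} The delicate point is step (3): a general unitary $U$ is a norm-limit of elements spread across many spectral degrees, so there is no single $(p,q)$ attached to $U$ itself. The crux is to argue that the trace-scaling factor $\tau(f)/\tau(e)$ is forced to equal the scaling produced by one particular homogeneous degree, rather than some average that could be non-integral. I expect the cleanest route is to use the covariance of the construction under the gauge action $\gamma_z$ together with uniqueness of the trace to isolate a well-defined degree, or alternatively to examine how $U$ must transport the minimal central projections and matrix units of the finite-dimensional building blocks of $e\F_n e$, tracking the $n$-adic denominators of their traces. Pinning down this integrality --- converting a potentially analytic limit into a discrete power of $n$ --- is where the argument will require the most care.
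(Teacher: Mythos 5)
Your setup is sound---the gauge action, the spectral subspaces $\OO_n^{(\ell)}$, and the observation that conjugation by homogeneous elements scales the trace by integer powers of $n$ are exactly the right ingredients---but the argument has a genuine gap precisely at the point you yourself flag as the ``main obstacle'': you never show that a single homogeneous degree controls the scaling, and neither of your suggested escape routes closes this. Decomposing $U=\sum_\ell U_\ell$ and approximating by finite sums spread over many degrees attaches no single pair $(p,q)$ to $U$, as you concede. The fallback of tracking $n$-adic denominators of traces of projections also fails: that route can only show $\tau(f)/\tau(e)$ is a positive unit of $\bZ[1/n]$, which for composite $n$ is strictly weaker than being a power of $n$. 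For instance, in $\F_6$ the corners cut by projections of trace $1/2$ and $1/3$ are abstractly isomorphic (the ratio $2/3$ is a unit of $\bZ[1/6]$), so the whole content of the lemma is that such an isomorphism cannot be spatially implemented by a unitary of $\OO_6$; no amount of trace bookkeeping on the corners alone can detect this.

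The missing idea, which is the heart of the paper's proof, is that $Ue$ itself (not $U$) is automatically a gauge eigenvector. Since $UexeU^*\in f\F_n f\subseteq\F_n$ is fixed by $\gamma_z$, one gets $exeU^*\gamma_z(U)=U^*\gamma_z(U)exe$ for all $x\in\F_n$, so $U^*\gamma_z(U)e$ lies in the relative commutant $(e\F_n e)'\cap e\OO_n e$, which is trivial. Hence $\gamma_z(Ue)=t(z)\,Ue$ for a continuous character $t$ of $U(1)$, forcing $t(z)=z^m$ for some $m\in\bZ$; that is, $Ue$ lies in the single spectral subspace $\OO_n^{(m)}$. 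Then $V:=UeS_1^{*m}$ (for $m\geq 0$; use $S_1^{-m}Ue$ for $m<0$) lies in $\F_n$, and the trace identity $\tau(VV^*)=\tau(V^*V)$ combined with $\tau(S_1^m eS_1^{*m})=\tau(e)/n^m$ yields the claim. If you insert this relative-commutant step, your outline collapses into essentially the paper's argument; without it, the integrality assertion remains unsupported.
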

\begin{proof}
For each $z\in U(1)$ and $x\in\F_n$, we have $\gamma_z(UexeU^*)=UexeU^*$. Hence 
$exeU^*\gamma_z(U) = U^*\gamma_z(U)exe$ and thus  $U^*\gamma_z(U)e$ belongs to 
$(e\F_n e)'\cap e\OO_n e$. Since this relative commutant is trivial, for each $z\in U(1)$ there exists a 
scalar $t(z)$ such that $\gamma_z(Ue) = t(z)Ue$. It follows that the mapping $t:U(1)\to\bC$ is a 
continuous character and consequently there exists an $m\in\bZ$ such that $t(z)=z^m$. 
We consider the following three cases, depending on the sign of $m$. 

\smallskip\noindent
(i) If $m=0$ then $Ue\in\F_n$ and hence $\tau(f)=\tau((Ue)(Ue)^*)=\tau((Ue)^*(Ue))=\tau(e)$. 

\smallskip\noindent
(ii) If $m>0$ then set $V:=UeS_1^{*m}$. Since $V$ belongs to $\F_n$, we have $\tau(f)=\tau(VV^*)=
\tau(V^*V)=\tau(S_1^m eS_1^{*m})=\tau(\varphi^m(e)S_1^mS_1^{*m})=\tau(e)/n^m$. 

\smallskip\noindent
(ii) If $m<0$ then set $V:=S_1^{-m}Ue$. Again $V\in\F_n$ and thus $\tau(e)=\tau(V^*V)=\tau(VV^*)=
\tau(S_1^{-m} fS_1^{*{-m}})=\tau(f)/n^{-m}$. 
\end{proof}

The following lemma is quite obvious but we give details since it allows us to reduce investigations of 
subalgebras of the general form (\ref{formofA}) to some special cases with conveniently chosen projections $e_j$.  

\begin{lemma}\label{lem2}
Let $e_1,\ldots,e_k$ and $f_1,\ldots,f_k$ be projections in $\F_n$ such that $\sum_{j=1}^k e_j = 1 = 
\sum_{j=1}^k f_j$ and $\tau(e_j)=\tau(f_j)$ for all $j$. Let $A=\bigoplus_{j=1}^k e_j\F_n e_j$ and 
$B=\bigoplus_{j=1}^k f_j\F_n f_j$. Then there exists a $u\in\U(\F_n)$ such that $uAu^*=B$. Hence we have 
$\N_{\OO_n}(A)\cong\N_{\OO_n}(B)$. 
\end{lemma}
\begin{proof}
For each $j=1,\ldots,k$ there exists a partial isometry $v_j\in\F_n$ such that $v_j^*v_j=e_j$ and 
$v_jv_j^*=f_j$. Set $u:=\sum_{j=1}^k v_j$. Then $ue_j\F_ne_ju^*=f_j\F_nf_j$ for each $j$, 
and the conclusion follows. 
\end{proof}

In view of Lemma \ref{lem2}, it suffices to consider those subalgebras $A$ of the form (\ref{formofA}) that 
all projections $e_j$ belong to the diagonal MASA $\D_n$. Each projection in $\D_n$ is a finite sum 
of projections $P_\mu$ for some words $\mu$. Before treating the general case, we note the following 
slight generalization of \cite[Example 1.18]{H}. 

\begin{example}\label{ex1}\rm 
Let $\mu_1,\ldots,\mu_k$ be words such that $\sum_{j=1}^k P_{\mu_j} =1$. Put 
$A=\bigoplus_{j=1}^k P_{\mu_j}\F_n P_{\mu_j}$. Then there is a natural isomorphism 
$$ \N_{\OO_n}(A) \cong \U(A) \rtimes {\mathcal S}_k, $$
where ${\mathcal S}_k$ is the symmetric group on $k$ letters. Indeed, for each permutation 
$\sigma\in\SS_k$ set $U_\sigma:=\sum_{j=1}^k S_{\mu_{\sigma(j)}}S_{\mu_j}^*$. It is easy to see 
that each $U_\sigma$ is unitary normalizing $A$, and that they form a group acting on $A$ by 
permuting the direct summands $ P_{\mu_j}\F_n P_{\mu_j}$. Thus we have an inclusion 
$\U(A) \rtimes {\mathcal S}_k \subseteq \N_{\OO_n}(A)$. For the reverse inclusion, take a 
$V\in\N_{\OO_n}(A)$. Considering $\Ad V$ action, we see that there exists 
a $\sigma\in\SS_k$ such that $VU_\sigma^*$ acts trivially on the center of $A$. Since 
$\N_{P_{\mu_j}\OO_n P_{\mu_j}}(P_{\mu_j}\F_n P_{\mu_j})=\U(P_{\mu_j}\F_n P_{\mu_j})$, for each $j$ we 
have $VU_\sigma^*P_{\mu_j}\in\U(P_{\mu_j}\F_n P_{\mu_j})$, and the claim easily follows. 
\hfill$\Box$
\end{example}

Now, we consider the general case of a $C^*$-subalgebra $A$ of the form (\ref{formofA}). Define 
an equivalence relation $\sim$ on the set $\{e_1,\ldots,e_k\}$ by 
\begin{equation}\label{equivalence}
e_i\sim e_j \Leftrightarrow \frac{\tau(e_i)}{\tau(e_j)}\in n^\bZ. 
\end{equation}
We denote by $\SS_\sim$ 
the subgroup of the permutation group of $\{e_1,\ldots,e_k\}$  consisting of those permutations which 
leave each of the equivalence classes of $\sim$ globally invariant. 

After this preparation, we are ready to prove our main result. 

\begin{theorem}\label{main}
Let $e_1,\ldots,e_k$ be non-zero projections in $\F_n$ such that $\sum_{j=1}^k e_j=1$, and let 
$A=\bigoplus_{j=1}^k e_j\F_n e_j$. Let $\SS_\sim$ be the corresponding subgroup of the permutation 
group of $\{e_1,\ldots,e_k\}$. Then there exists a natural group isomorphism 
$$ \N_{\OO_n}(A) \cong \U(A) \rtimes {\mathcal S}_\sim. $$
\end{theorem}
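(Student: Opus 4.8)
The plan is to realize $\N_{\OO_n}(A)$ as a split extension of $\SS_\sim$ by $\U(A)$. Since each corner $e_j\F_n e_j$ is simple with unit $e_j$, the center of $A$ is $Z(A)=\bigoplus_{j=1}^k\bC e_j$, and the $e_j$ are precisely its minimal projections. Every $V\in\N_{\OO_n}(A)$ induces an automorphism $\Ad V$ of $A$, hence of $Z(A)$, so it permutes the $e_j$: there is a unique $\sigma\in\SS_k$ with $Ve_jV^*=e_{\sigma(j)}$ and $V(e_j\F_n e_j)V^*=e_{\sigma(j)}\F_n e_{\sigma(j)}$. This defines a homomorphism $q\colon\N_{\OO_n}(A)\to\SS_k$, $q(V)=\sigma$. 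The theorem then amounts to three assertions: the image of $q$ is exactly $\SS_\sim$, the kernel is $\U(A)$, and $q$ admits a splitting homomorphism.

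For the image and the splitting I would construct, for each pair $(i,j)$ with $e_i\sim e_j$, a partial isometry $w_{ji}\in\OO_n$ with $w_{ji}^*w_{ji}=e_i$, $w_{ji}w_{ji}^*=e_j$ and $w_{ji}(e_i\F_n e_i)w_{ji}^*=e_j\F_n e_j$. Say $\tau(e_j)=n^m\tau(e_i)$ with $m\geq0$ (otherwise pass to the adjoint). Choosing a word $\alpha$ with $|\alpha|=m$, the inequality $\tau(e_i)=n^{-m}\tau(e_j)\leq n^{-m}=\tau(P_\alpha)$ lets me select, inside $\F_n$, a partial isometry $v_1$ from $e_i$ onto a subprojection $e_i'\leq P_\alpha$. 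Then $S_\alpha^*$ implements a corner isomorphism $e_i'\F_n e_i'\to(S_\alpha^*e_i'S_\alpha)\F_n(S_\alpha^*e_i'S_\alpha)$ that scales the trace by $n^m$, so the image projection has the same trace as $e_j$ and may be moved onto $e_j$ by a partial isometry $v_2\in\F_n$ as in Lemma \ref{lem2}; the composite $w_{ji}:=v_2S_\alpha^*v_1$ is the desired map. Fixing in each $\sim$-class a base projection and, by this construction with one end at the base, partial isometries $w_i$ from the base onto $e_i$, the unitaries $U_\sigma:=\sum_j w_{\sigma(j)}w_j^*$ (for $\sigma\in\SS_\sim$) lie in $\N_{\OO_n}(A)$, satisfy $U_\sigma e_jU_\sigma^*=e_{\sigma(j)}$, and a short computation with the relations $w_j^*w_l=\delta_{jl}w_{\mathrm{base}}^*w_{\mathrm{base}}$ gives $U_{\sigma\sigma'}=U_\sigma U_{\sigma'}$; thus $\sigma\mapsto U_\sigma$ is a splitting homomorphism onto a copy of $\SS_\sim$ inside the normalizer.

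For the reverse containment I would take $V\in\N_{\OO_n}(A)$ with $q(V)=\sigma$ and apply Lemma \ref{lem1} to $U=V$, $e=e_j$, $f=e_{\sigma(j)}$; it yields $\tau(e_{\sigma(j)})/\tau(e_j)\in n^\bZ$, i.e. $e_j\sim e_{\sigma(j)}$, so $\sigma\in\SS_\sim$ and the image of $q$ equals $\SS_\sim$. Replacing $V$ by $W:=VU_\sigma^{-1}$ produces a normalizer fixing every $e_j$, so $W$ commutes with all $e_j$ and decomposes as $W=\sum_j W_j$ with each $W_j=e_jWe_j$ a unitary of $e_j\OO_n e_j$ normalizing $e_j\F_n e_j$. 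Here I invoke the relative-normalizer fact $\N_{e\OO_n e}(e\F_n e)=\U(e\F_n e)$ for any non-zero projection $e\in\F_n$, which follows from the gauge argument in the proof of Lemma \ref{lem1}: the relation $W_j^*\gamma_z(W_j)\in(e_j\F_n e_j)'\cap e_j\OO_n e_j=\bC e_j$ forces $\gamma_z(W_j)=z^pW_j$ for some $p\in\bZ$, and the trace computation of that proof applied with $f=e$ forces $p=0$, whence $W_j\in e_j\F_n e_j$. Therefore $W\in\U(A)$ and $V\in\U(A)\,U_\sigma$, so $\ker q=\U(A)$ and every normalizer factors through the splitting.

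Combining these, the exact sequence $1\to\U(A)\to\N_{\OO_n}(A)\xrightarrow{q}\SS_\sim\to1$ splits via $\sigma\mapsto U_\sigma$, giving the asserted isomorphism $\N_{\OO_n}(A)\cong\U(A)\rtimes\SS_\sim$ with $\SS_\sim$ acting by $\Ad U_\sigma$. The one genuinely substantial step is pinning down the image of $q$, and it is exactly here that Lemma \ref{lem1} is indispensable: it is what forbids a normalizer from exchanging two summands whose traces differ by a factor outside $n^\bZ$. The remaining ingredients, namely the trace-rescaling construction of the $w_{ji}$ and the triviality of the relevant relative commutants, are comparatively routine given Lemmas \ref{lem1} and \ref{lem2} and Example \ref{ex1}.
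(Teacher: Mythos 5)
Your proposal is correct, and its skeleton coincides with the paper's: both proofs exhibit the exact sequence $1\to\U(A)\to\N_{\OO_n}(A)\to\SS_\sim\to 1$, pin down the image of the quotient map by Lemma \ref{lem1}, identify the kernel with $\U(A)$ via the gauge/trace argument (the fact $\N_{e\OO_n e}(e\F_n e)=\U(e\F_n e)$, which the paper uses without spelling out and you correctly derive from the triviality of $(e\F_n e)'\cap e\OO_n e$), and then produce an explicit splitting. Where you genuinely diverge is in the construction of the section $\sigma\mapsto U_\sigma$ and the proof that it is multiplicative. The paper first invokes Lemma \ref{lem2} to replace the $e_j$ by sums of diagonal projections $P_{\mu_i}$ with $|\mu_i|$ constant, builds $U_\sigma$ as an explicit sum of monomials $S_{\mu_{\psi(i,\nu)}}S_{\mu_i\nu}^*$ using the $\prec$-order-preserving bijection $\psi$, and deduces $U_\sigma U_{\sigma'}=U_{\sigma\sigma'}$ from the uniqueness of a unitary satisfying the normal-form conditions $[U1]$--$[U3]$. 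You instead construct the connecting partial isometries abstractly as $w_{ji}=v_2S_\alpha^*v_1$ (trace comparison in the UHF algebra plus the corner isomorphism implemented by $S_\alpha^*$) and force multiplicativity by routing every $w_j$ through a fixed base projection of its $\sim$-class, so that $U_\sigma U_{\sigma'}=U_{\sigma\sigma'}$ follows from the orthogonality relations $w_j^*w_l=\delta_{jl}e_{b(j)}$; this works precisely because $\sigma\in\SS_\sim$ preserves the classes and hence the base points. Your route dispenses with the reduction to $\D_n$ and with the combinatorial bookkeeping of $\psi$ and $[U1]$--$[U3]$, at the cost of a non-canonical choice of the $w_j$; the paper's route yields a distinguished section written explicitly in the generators $S_i$, which is more informative but requires the uniqueness argument to see that it is a homomorphism. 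Both are complete proofs.
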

\begin{proof}
By Lemma \ref{lem2}, we may assume that each projection $e_j$ belongs to the diagonal MASA $\D_n$. 
Thus, there exist words $\mu_1,\mu_2,\ldots,\mu_N$, all of the same length and such that $\sum_{j=1}
^N P_{\mu_j}=1$, and there exist positive integers $m_1,m_2,\ldots,m_k$ such that $\sum_{j=1}^k m_j=N$ 
and $e_j=\sum_{i=m_{j-1}+1}^{m_j}P_{\mu_i}$ (here we put $m_0=0$) for each $j$. Relabelling, if 
necessary, we may assume that $\mu_{i_1}\prec\mu_{i_2}$ whenever $m_{j-1}+1 \leq i_1 \leq i_2 \leq m_j$. 

Now, let $\sigma\in\SS_\sim$. Take a $j\in\{1,\ldots,k\}$ and let $\sigma(e_j)=e_h$. There is an $m\in\bZ$ 
such that $m_j - m_{j-1} = (m_h-m_{h-1})n^m$. Suppose $m\geq 0$ (the case $m\leq0$ being treated 
analogously). We note that $e_h=\sum_{i=m_{h-1}+1}^{m_h}
\sum_{|\nu|=m}P_{\mu_i\nu}$. There is a unique $\prec$ order-preserving bijection 
\begin{equation}\label{bijectionpsi}
\psi:\{m_{h-1}+1,\ldots,m_h\} \times \{\nu : |\nu|=m\} \to \{m_{j-1}+1,\ldots,m_j\}, 
\end{equation}
that is,  
\begin{equation}\label{orderpsi}
\mu_{i_1}\nu_1 \prec \mu_{i_2}\nu_2 \; \Rightarrow \; \mu_{\psi(i_1,\nu_1)} \prec \mu_{\psi(i_2,\nu_2)}. 
\end{equation}
We set 
\begin{equation}\label{uj}
u_j:=\sum_{i=m_{h-1}+1}^{m_h}\sum_{|\nu|=m}S_{\mu_{\psi(i,\nu)}}S_{\mu_i\nu}^*. 
\end{equation}
By construction, we have $u_j^*u_j=e_h$ and $u_ju_j^*=e_j$. Observe that 
$$ S_{\mu_i\nu}S_{\mu_{\psi(i,\nu)}}^* \F_n S_{\mu_{\psi(i',\nu')}}S_{\mu_i'\nu'}^* \subseteq \F_n, $$
since $|\mu_i\nu|-|\mu_{\psi(i,\nu)}|+|\mu_{\psi(i',\nu')}|-|\mu_i'\nu'|=0$. It follows that 
\begin{equation}\label{ujaction}
u_j^*e_j\F_n e_j u_j = e_h\F_n e_h. 
\end{equation}
Now, we define 
\begin{equation}\label{usigma}
U_\sigma := \sum_{j=1}^k u_j^*. 
\end{equation}
Then $U_\sigma$ is an element of $\OO_n$ with the following properties:
\begin{description}
\item{[U1]} $U_\sigma$ is a unitary normalizing $A$. 
\item{[U2]} $U_\sigma e_j U_\sigma^*=\sigma(e_j)$ for each $j=1,\ldots,k$. 
\item{[U3]} For each $j=1,\ldots,k$ there exist words $\alpha_i,\beta_i$, $i=1,\ldots,r$ (for some $r\in\bN$) 
such that 
\begin{itemize}
\item $U_\sigma e_j = \sum_{i=1}^r S_{\alpha_i}S_{\beta_i}^*$, 
\item $\alpha_i \prec \alpha_{i'}$ whenever $\beta_i \prec \beta_{i'}$, and 
\item $|\alpha_i| - |\beta_i| = |\alpha_{i'}| - |\beta_{i'}|$ for all $i,i'=1,\ldots,r$. 
\end{itemize}
\end{description}
It is not difficult to verify that conditions $[U1]$--$[U3]$ characterize $U_\sigma$ uniquely. This uniqueness 
easily implies that $\{U_\sigma : \sigma\in\SS_\sim\}$ is a subgroup of $\U(\SS_\sim)$. Indeed, given 
$\sigma$ and $\sigma'$ in $\SS_\sim$, both $U_\sigma U_{\sigma'}$ and $U_{\sigma\sigma'}$ satisfy 
conditions $[U1]$--$[U3]$. Since the group  $\{U_\sigma : \sigma\in\SS_\sim\}$ is isomorphic to $\SS_\sim$ 
and acts on $\U(A)$ by $\Ad$, we have an inclusion $\U(A)\rtimes\SS_\sim \subseteq \N_{\OO_n}(A)$. 

To see that the reverse inclusion $\N_{\OO_n}(A) \subseteq \U(A)\rtimes\SS_\sim$ holds as well, take 
a $V$ in $\N_{\OO_n}(A)$. The action of $V$ on the center of $A$ yields a permutation $\sigma$ of 
$\{e_1,\ldots,e_k\}$. By Lemma \ref{lem1}, this permutation belongs to $\SS_\sim$. But then $VU_\sigma^*$ 
fixes each projection $e_j$, and thus it normalizes $e_j\F_n e_j$. Hence, for each $j$ there is a 
$w_j\in\U(e_j\F_n e_j)$ such that  $VU_\sigma e_j = w_j$. Putting $W:=\sum_{j=1}^k w_j$ we get a 
unitary in $A$ such that $V=WU_\sigma$, and the proof is complete. 
\end{proof}

%%%%%%%%%%%%%%%%%%%%%%%%%%%%%%%%%%%%%%%%%%%%%%%%%
%%%%%%%%%%%%%%%%%%%%%%%%%%%%%%%%%%%%%%%%%%%%%%%%%

\medskip\noindent
Tomohiro Hayashi \\
Nagoya Institute of Technology \\ 
Gokiso-cho, Showa-ku, Nagoya, Aichi \\
466--8555, Japan \\
E-mail: hayashi.tomohiro@nitech.ac.jp \\

\smallskip\noindent
Wojciech Szyma{\'n}ski\\
Department of Mathematics and Computer Science \\
The University of Southern Denmark \\
Campusvej 55, DK--5230 Odense M, Denmark \\
E-mail: szymanski@imada.sdu.dk

\end{document}